\documentclass[12pt]{article}
\usepackage[T1]{fontenc} 
\usepackage{amsfonts}    
\usepackage{amsmath}     
\usepackage{amssymb}     
\usepackage{graphicx}    
\usepackage{amsthm}      
\usepackage{geometry}    

\newtheorem{theorem}{Theorem}[section]
\newtheorem{lemma}[theorem]{Lemma}
\newtheorem{proposition}[theorem]{Proposition}
\newtheorem{corollary}[theorem]{Corollary}
\newtheorem{definition}[theorem]{Definition}

\newtheorem{Claim}{Claim}

\geometry{top=2cm, bottom=2cm, left=2.5cm, right=2.5cm}

\usepackage{xcolor}

\newcommand{\N}{\mathbb N}

\newcommand{\Z}{\mathbb Z}
\newcommand{\ov}{\overline}

\title{On the Calculus of the spectrum of the Laplacian for functions in Lens Spaces}
\author{Gedeana Pantoja da Silva, Alexandre Casassola Gon\c{c}alves, Ferreira da Silva Rafael}
\date{\today} 

\begin{document}
	\maketitle
	
	\begin{abstract}

  We improve a specific method to obtain the dimension of the eigenspaces of the Laplace-Beltrami operator on lens spaces and establish some applications  related to the explicit description of the dimension of the smallest positive eigenvalue    and  the parity of the dimensions of the eigenspaces.

\end{abstract}

\section{Introduction}

  The study of the spectrum of the Laplacian on closed Riemannian manifolds has proven to be extremely useful both in mathematics and in physics, with applications in areas like the study of  wave equations, explicit computation of traces of operators,  calculus of the regularized determinant  and  of the analytic torsion.
 One of the main features of this operator that makes it useful in applications is the fact that its spectrum is discrete and has no accumulation points, which makes it possible to define and study several related concepts, especially the ones connected to spectral zeta functions.
 
  For several of the applications of the Laplacian it is necessary, or at least useful, to  utilize  a precise description of the spectrum and dimension of the eigenspaces of this operator and this kind of description has been the subject of research in mathematics for decades in works like \cite{beitz2020spectrum}, \cite[Chapter III]{berger1971spectre}, \cite{boucetta1999spectre}, \cite{ikeda1978spectra}, \cite{iwasaki1979spectra}, \cite{lauret2016spectra}, \cite{lauret2018spectrum}, \cite{levy2006spectre}, \cite{pilch1984formulas},  \cite{prufer1985spectrum2}, \cite{prufer1989spectrum}, \cite{Puta}, \cite{rafael2022determinantodd}, \cite{rafael2020regularized},  \cite{rafael2018determinantes-projetivos-pares}, \cite{sakai1976spectrum}, \cite{tsagas1983spectrum}, \cite{weng1996analytic} and \cite{yamamoto1980number}.   
   Among the several articles that studied the spectrum and dimension of the eigenspaces of the Laplacian, one of the most prominent is \cite{lauret2016spectra}, where the authors established a description of the multiplicities of the eigenvalues of the Laplacian  acting on sections of   natural vector bundles over spherical space forms.
 
 The procedure developed in \cite{lauret2016spectra}  may be roughly summarized  in the following way: the authors considered the traditional definition of spherical space forms as quotients $\Gamma \backslash G/K$, where $G=SO(2m)$, $K\simeq SO(2m-1)$ and $\Gamma$ is a discrete subgroup of $SO(2m)$ acting freely on $G$,  they considered unitary representations $(\tau, W_{\tau})$ of $K$, they used these spaces and representations to construct homogeneous vector bundles   $E_{\tau}:= G \times _{\tau} W_{\tau}$, so that the spaces of smooth sections $\Gamma ^{\infty}(E_{\tau})$ and  $C^{\infty}(G/K;\tau):=\{ f:G\rightarrow W_{\tau} \; | \; f(xk)=\tau (k^{-1})f(x)\}$ are isomorphic, then they used the complexification $g$ of the Lie algebra $g_0$ of $G$ and the Casimir element $C$ of $g$ to obtain the second order elliptic differential operators $\Delta _{\tau}$ on $C^{\infty}(G/K;\tau)$ and $\Delta _{\tau, \Gamma}$ on $\Gamma ^{\infty}(\Gamma \backslash E_{\tau})$. After these basic constructions, the authors developed several results that described the behaviour of the spectrum of  $\Delta _{\tau, \Gamma}$ and allowed them to achieve their main objective, which was finding examples of Riemannian manifolds whose Laplacians are isospectral on forms of arbitrary degree, but not strongly isospectral.
  
  In the specific case of the Laplace-Beltrami operator acting on functions over lens spaces $L=L(p;q_1,\dots , q_m)$, denoting by  $\mathcal{L}=\mathcal{L}(p;q_1,\dots ,q_m)$ the congruence lattice associated to $L$, the multiplicities of the eigenvalues $i(i+n-1)$ of the Laplacian $\Delta ^0_{L}$ in terms of the numbers $N_{\mathcal{L}}(h)$ of congruence lattice points with $||\cdot ||_{1}$-length equal to $h$ are given on \cite[page 1055]{lauret2016spectra} by the expression
   \begin{equation} \label{dim-auto-esp}
   \sum\limits _{s=0}^{\lfloor i/2\rfloor} \left( \begin{array}{c}  s+m-2 \\ m-2  \end{array}  \right)   N_{\mathcal{L}}(i-2s),
   \end{equation}
  but the authors did not develop an efficient method to calculate the numbers $N_{\mathcal{L}}(h)$, since this was not an objective of their research. In order to effectively calculate the multiplicity of the eigenvalue $i(i+n-1)$ of $\Delta ^0_{L}$ using exclusively the theory of \cite{lauret2016spectra} we would need to calculate first the numbers  $N_{\mathcal{L}}(h,l)$  of congruence lattice points with $||\cdot ||_{1}$-length equal to $h$ and with exactly $l$ entries equal to zero, and then calculate 
  $$N_{\mathcal{L}}(h)=\sum\limits _{l=0}^mN_{\mathcal{L}}(h,l).$$
  
  The expresion for $N_{\mathcal{L}}(h,l)$ is given in \cite[Theorem 4.2]{lauret2016spectra} by
  \begin{equation} \label{Lauret NL(h,l)}
  N_{\mathcal{L}}(h,l)=
  \sum\limits _{s=0}^{m-l} 
  2^s\left( \begin{array}{c} l+s \\ s  \end{array} \right)
   \sum\limits _{t=s}^{\alpha} \left( \begin{array}{c} t-s+m-l-1 \\ m-l-1  \end{array} \right)
   N_{\mathcal{L}}^{\mathrm{red}}(h-tq,l+s),
  \end{equation}
  where $N_{\mathcal{L}}^{\mathrm{red}}(h-tq,l+s)$ is described in \cite[Definition 4.1]{lauret2016spectra}.

 The main result of the present text is Theorem \ref{formula} where we establish an expression for   $N_{\mathcal{L}}(h)$ that does not depend on the numbers $N_{\mathcal{L}}(h,l)$ and therefore is smaller and easier to use. As applications we prove properties of the spectrum of the Laplace-Beltrami operator on lens spaces, specially properties related to the dimension of the eigenspaces.
 
  The formula obtained by us for $N_{\mathcal{L}}(k+np)$ with $0\leq k <p$ and $n\in \N \cup \{0\}$ is given by:
 \begin{equation*}
		N_{\mathcal{L}}(k+np) =\sum_{t=0}^{m-1}\sum_{U\subseteq M}C_{m-1}^{n-t+|U|-1}\gamma(U,k+tp),
	\end{equation*}
	where $C_{a}^{b}$ is the binomial coefficient  $\frac{b!}{a!(b-a)!}$ and   $ \gamma(U,k+tp) $, which are described precisely in Definition \ref{NL(k)} below, are the analogous of the numbers $ N_{\mathcal{L}}^{\mathrm{red}}(h-tq,l+s)$ in expression (\ref{Lauret NL(h,l)}).

\section{Preliminaries and Notation}
Let $p,q_1,q_2,\dots,q_m$ be natural numbers such that for every $i$, $q_i$ is co-prime to $p$, and let $L(p;q_1,\dots,q_m)$ denote the corresponding  lens space.
We will use $M$ to denote the set $M=\{1,2,\dots,m\}$, $U$ to denote an arbitrary subset of $M$ and we will denote by $u:=|U|$ the cardinality of $U$.  The \textit{congruence lattice} associated to $U$  is the set
\begin{equation}
\label{lattice}	\mathcal{L}(U)=\{(x_{i_1},x_{i_2},\dots,x_{i_u})\in\mathbb{Z}^u : x_{i_1}q_{i_1}+x_{i_2}q_{i_2}+\cdots +
	x_{i_u}q_{i_u}\equiv0(\text{ mod }p)\}
\end{equation}
and for any $x=(x_{i_1},\dots,x_{i_u})\in \mathcal{L}(U)$, the 1-norm of $x$ is given by $\|x\|_1=|x_{i_1}|+\cdots+|x_{i_u}|$. 

We may use the $1$-norm to describe special subsets of the congruence lattice.
	Given $U\subset M$, and $k\geq0$, we set:
	\begin{align}
	\nonumber	\Omega(U,k)&=\{x\in\mathcal{L}(U);\ \|x\|_1=k\},\\
		C(U,k)&=\{x\in\Omega(U,k);\;   \forall\ i\in U,\, |x_i|<p\ \}.\label{L,Omega,C}
	\end{align}

We are interested in certain coefficients related to the congruence lattices. 
Those are defined as follows:
\begin{definition}\label{NL(k)} 
	For $k\geq0$ and $U\subset M$ we define:   
	\begin{equation}
	  N_{\mathcal{L}}(U,k)=|\Omega(U,k)|.
        \end{equation}
        When $U=M$ we write, for short, $N_{\mathcal{L}}(k):=N_{\mathcal{L}}(M,k)$.
        We also set
        \begin{equation}
          	\gamma(U,k)=|C(U,k)|.
	\end{equation}
\end{definition}

Notice that our definition of $N_{\mathcal{L}}(k)$ coincides with the definition present in \cite[expression $(11)$]{lauret2016spectra}, except for the fact that in \cite[expression $(11)$]{lauret2016spectra} the author allowed $\mathcal{L}$ to be any sublattice of $\Z ^m$, while  we are specializing in lattices as described in expression (\ref{lattice}). On the other hand, our definition of $N_{\mathcal{L}}(U,k)$ diverges from the definition of $N_{\mathcal{L}}(k,l)$ present in \cite[expression $(12)$]{lauret2016spectra} and the difference between $N_{\mathcal{L}}(U,k)$ from our text and $N_{\mathcal{L}}(k,l)$ from \cite[expression $(12)$]{lauret2016spectra} is the foundation of the differences between our study and the one developed in \cite{lauret2016spectra}.

Our main objective is to compute 
$N_{\mathcal{L}}(k)=N_{\mathcal{L}}(M,k)$, and the sets defined below will assist us in  this objective. 
 For integers $n\geq0$, $k\in\{0,1,\dots,p-1\}$ and $N\subset M$, we define:
	\begin{equation}\begin{aligned}
	    \Omega_N(k+np)&=\Omega_N(M,k+np)\\
            &:=\{x\in\Omega(M,k+np);\; ( x_i<0\text{ and }x_i\equiv 0\text{ mod }p )
		\iff i\in N\}.
	\end{aligned}\end{equation}
Thus, $\Omega_N(k+np)$ gathers the solutions of the linear congruence that have norm 1 equal to $k+np$
and have the coefficient $x_i$ as a negative multiple of  $p$ if and only if $i$ belongs to $N$.

The following claims involve the sets $C(U,k+tp)$, $\Omega_N(k+np)$ and $\Omega(M,k+np)$, they compose the basis for the main results of the text.
\begin{Claim} If $U\subset M$, and $t\ne t'$, then $C(U,k+tp)\cap C(U,k+t'p)=\emptyset$.
\end{Claim}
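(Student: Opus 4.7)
The claim is essentially a tautology flowing directly from the definition of $C(U,\cdot)$, so the plan is extremely short: reduce the statement to a comparison of norms.

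My plan is to suppose for contradiction that there exists some $x\in C(U,k+tp)\cap C(U,k+t'p)$. By the very definition of $C(U,k+tp)$ as a subset of $\Omega(U,k+tp)$, we have $\|x\|_1=k+tp$; similarly membership in $C(U,k+t'p)$ forces $\|x\|_1=k+t'p$. Since the $1$-norm of a fixed vector $x\in\Z^u$ is a single well-defined nonnegative integer, the two equalities give $k+tp=k+t'p$, and subtracting $k$ and dividing by $p\ne 0$ yields $t=t'$, contrary to hypothesis. Therefore the intersection is empty.

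There is no real obstacle here; the side condition $|x_i|<p$ for $i\in U$ used in the definition of $C(U,\cdot)$ plays no role in this particular claim, and the conclusion depends only on the norm constraint. I would keep the argument to a single line in the final write-up, perhaps with a brief remark that it is the norm condition—not the boundedness condition $|x_i|<p$—that makes the sets pairwise disjoint across distinct values of $t$.
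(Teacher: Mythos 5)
Your proof is correct and matches the paper's approach: the paper simply states the claim is a direct consequence of the definition of $\Omega$, and your argument spells out exactly that—membership in both sets would force $\|x\|_1=k+tp=k+t'p$ and hence $t=t'$. Your remark that the boundedness condition $|x_i|<p$ is irrelevant here is accurate.
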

\begin{proof}
	This is a direct consequence of the definition of $\Omega$.
\end{proof}
\begin{Claim}\label{afirm-1}
	If $N,N'\subset M$ with $N\ne N'$, then $\Omega_N\cap \Omega_{N'}=\emptyset$.
\end{Claim}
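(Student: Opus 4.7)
The statement is essentially immediate from the biconditional built into the definition of $\Omega_N$, so the plan is very short. The strategy is to show that any $x$ that lies in some $\Omega_N$ determines $N$ uniquely, so two different subsets cannot share an element.

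More precisely, I would proceed by contradiction: suppose there is some $x \in \Omega_N \cap \Omega_{N'}$. Since $x \in \Omega_N(k+np)$, the defining condition gives, for every $i \in M$, the equivalence
\[
i \in N \iff x_i < 0 \text{ and } x_i \equiv 0 \pmod p.
\]
Applying the same definition to $x \in \Omega_{N'}(k+np)$ yields
\[
i \in N' \iff x_i < 0 \text{ and } x_i \equiv 0 \pmod p.
\]
Both $N$ and $N'$ therefore coincide with the set $S(x) := \{i \in M : x_i < 0 \text{ and } x_i \equiv 0 \pmod p\}$, which is intrinsically determined by $x$. Hence $N = S(x) = N'$, contradicting $N \neq N'$.

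There is no real obstacle here; the whole content of the claim is that the index $N$ is recoverable from $x$, which follows directly from the "if and only if" clause in the definition of $\Omega_N$. One small point worth noting in the write-up is that the conclusion does not depend on the value of $n$ or $k$, so the sets $\Omega_N(k+np)$ corresponding to different parameters are likewise disjoint across different $N$; but for the statement as given, only the single-parameter disjointness is asserted and the one-line argument above suffices.
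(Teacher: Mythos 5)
Your proof is correct and takes essentially the same approach as the paper's: both arguments exploit the biconditional in the definition of $\Omega_N$ to conclude that the index set is determined by $x$ (the paper phrases this by picking a witness $j$ in the symmetric difference of $N$ and $N'$, while you observe directly that both sets equal $S(x)$, but this is the same one-line idea).
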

\begin{proof}
  Assume, for the sake of contradiction, that $N\ne N'$ and $(x_1,\dots,x_m)\in \Omega_N\cap \Omega_{N'}$.
  Without loss of generality, suppose $j\in N$ with $j\notin N'$. By the definition of $\Omega_N$, the coordinate $x_j$
  is a negative multiple of $p$.

  On the other hand, since $j\notin N'$,  the coordinate $x_j$ is not a negative multiple of $p$, which is a contradiction.
  Therefore, there is no $(x_1,\dots,x_m)\in \Omega_N\cap \Omega_{N'}$, that is, $\Omega_N\cap\Omega_{N'}=\emptyset$.
\end{proof}

\begin{Claim}\label{afirm-2}
	Every element $(x_1,\dots,x_m)\in\Omega(M,k+np)$ belongs to $\Omega_N(k+np)$ for some $N\subset M$.
\end{Claim}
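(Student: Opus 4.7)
The claim is essentially a tautology once we construct the right $N$. My plan is to take an arbitrary $x=(x_1,\dots,x_m)\in\Omega(M,k+np)$ and build $N$ explicitly as the set of coordinates that witness the defining condition of some $\Omega_N$.

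Concretely, I would define
\begin{equation*}
  N := \{\, i \in M : x_i < 0 \text{ and } x_i \equiv 0 \text{ mod } p \,\}.
\end{equation*}
This is a subset of $M$ by construction. To verify $x\in\Omega_N(k+np)$, I need to check two things: (i) that $x$ belongs to $\Omega(M,k+np)$, which holds by hypothesis; and (ii) that the biconditional ``$x_i<0$ and $x_i\equiv 0$ mod $p$ $\iff$ $i\in N$'' holds for every $i\in M$. The forward direction is immediate from the definition of $N$, and the reverse direction is immediate as well, because $i\in N$ was defined to mean exactly that $x_i<0$ and $x_i\equiv 0\bmod p$. Hence $x\in\Omega_N(k+np)$.

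There is no real obstacle here: the statement is a partitioning observation, and once combined with Claim~\ref{afirm-1} it yields that the family $\{\Omega_N(k+np)\}_{N\subset M}$ partitions $\Omega(M,k+np)$, which is what will be used later to decompose $N_{\mathcal{L}}(k+np)$ as a sum over subsets $N$. The only thing worth emphasizing in the write-up is that the chosen $N$ is unique and does not require any case analysis; the definition of $\Omega_N$ is tight enough that $N$ is forced by $x$.
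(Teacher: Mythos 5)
Your proof is correct and takes exactly the same approach as the paper: defining $N$ as the set of indices $i$ with $x_i<0$ and $x_i\equiv 0 \bmod p$, so that the biconditional in the definition of $\Omega_N(k+np)$ holds by construction. The paper's own proof is essentially identical, just more terse.
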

\begin{proof}
	Indeed, given $(x_1,\dots,x_m)\in\Omega(M,k+np)$,  it suffices to consider $N=\{i\in M; x_i<0 \mbox{ and }x_i
	\equiv0(\text{ mod }p)\}$. Thus $(x_1,\dots,x_m)\in\Omega_N(k+np)$.
\end{proof}

The previous statements lead us to the following proposition:
\begin{proposition}\label{prop7} For every $k\in \{ 0,1,\dots ,p-1  \}$ and $n \in \N \cup \{0\}$ we have:
	\begin{equation}
		\Omega(M,k+np)=\bigsqcup_{N\subseteq M}\Omega_N(k+np).
	\end{equation}
\end{proposition}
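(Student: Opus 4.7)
The plan is to observe that the proposition is essentially an immediate consequence of Claims \ref{afirm-1} and \ref{afirm-2}, combined with the trivial observation that each $\Omega_N(k+np)$ is by definition a subset of $\Omega(M,k+np)$. So the proof reduces to verifying two set-theoretic inclusions and then upgrading the resulting union to a disjoint union.

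First I would prove the inclusion $\Omega(M,k+np)\subseteq \bigcup_{N\subseteq M}\Omega_N(k+np)$. Given an arbitrary $(x_1,\dots,x_m)\in\Omega(M,k+np)$, Claim \ref{afirm-2} produces a specific $N\subseteq M$ (namely $N=\{i\in M: x_i<0 \text{ and } x_i\equiv 0\ (\mathrm{mod}\ p)\}$) for which $(x_1,\dots,x_m)\in\Omega_N(k+np)$, which is exactly what is needed. The reverse inclusion $\bigcup_{N\subseteq M}\Omega_N(k+np)\subseteq \Omega(M,k+np)$ is immediate from the definition of $\Omega_N(k+np)$, since this set was introduced as a subset of $\Omega(M,k+np)$ distinguished by an extra sign/divisibility condition. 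Combining these two inclusions gives the equality as sets.

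Finally I would upgrade the union to a disjoint union by invoking Claim \ref{afirm-1}: whenever $N\neq N'$, the sets $\Omega_N$ and $\Omega_{N'}$ are disjoint, so the bigcup is indeed a $\bigsqcup$. There is no real obstacle here; the only thing worth being careful about is checking that the $N$ produced in Claim \ref{afirm-2} is genuinely a subset of $M$ (possibly empty, which corresponds to elements of $\Omega(M,k+np)$ having no coordinates that are negative multiples of $p$), so the indexing set $N\subseteq M$ in the statement of the proposition really covers all cases.
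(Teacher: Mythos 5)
Your proposal is correct and follows exactly the same route as the paper: the inclusion $\Omega_N(k+np)\subseteq\Omega(M,k+np)$ from the definition, the reverse inclusion from Claim \ref{afirm-2}, and disjointness from Claim \ref{afirm-1}. Your added remark that the $N$ produced by Claim \ref{afirm-2} may be empty is a harmless (and slightly more careful) elaboration of the same argument.
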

\begin{proof}
By definition, $\Omega_N(k+np)\subseteq\Omega(M,k+np)$. From {Claim \ref{afirm-2}},
we have $\Omega(M,k+np)\subseteq\cup_{N\subset M}\Omega_N(k+np)$ and by {Claim \ref{afirm-1}}, this union must be disjoint.
\end{proof}


\section{Main Results}

The strategy for computing $N_{\mathcal{L}}(k+np)$ is to establish a function $f$ that maps $\Omega(M,k+np)$,
which is the disjoint union of the subsets $\Omega_N(k+np)$, for $N\subseteq M$, {onto a subset of the  
  union of the} sets $C(U,k+tp)$, for $U\subset M$ and $t\geq 0$.

To be more specific, we define for each $N\subseteq M$ a function $f_N:\Omega_N(M,k+np)\to \bigsqcup_{t\geq0}C(\ov N,{k+tp})$,
given by $f_N(x_1,\dots,x_m)=(\overline{x}_1,\dots,\hat{x}_{i_1},\dots,\hat{x}_{i_{|N|}},\dots,\overline{x}_m)$. The indices
of the suppressed coefficients are precisely $\{i_1,i_2,\dots,i_{|N|}\}=N$. We use the notation $\ov N$ to denote
the complement $\ov N=M-N$.

In this expression we have the following convention for $\ov{x_i}$:
\begin{equation}\label{def-xibar}
	\ov{x_i}=\begin{cases}
		x_i\mod p, &\mbox{if }x_i\ge 0\\
		(x_i\mod p)-p, &\mbox{if }x_i<0,
	\end{cases}
\end{equation}
where $x_i\ \text{mod}\ p$ is the representative of the congruence class modulo $p$ of $x_i$ in $\{0,1,2,\dots,p-1\}$.

Given the functions $f_N$, we define
\begin{equation}
	f:\Omega(M,k+np)=\bigsqcup_{N\subseteq M}\Omega_N(M,k+np)\longrightarrow \bigsqcup_{U\subseteq M, t\geq 0} C(U,{k+tp}),
\end{equation}
by
\begin{equation}
	f(x_1,\dots,x_m)=f_N(x_1,\dots,x_m)\quad\text{if}\ (x_1,\dots,x_m)\in\Omega_N(k+np).
\end{equation}

\begin{lemma}\label{main-th} The function $f$ is well defined. For each $N\subset M$, 
$f|_{\Omega _N(k+np)}$ is surjective onto $\bigsqcup _{t=0}^{n-|N|} C(\ov N,{k+tp})$. Given $y\in C(\ov{N},{k+tp})$, the cardinality of the preimage of $y$ by $f$ is
\begin{equation}
  |f^{-1}(\{y\})|=C^{n-t+|\ov N|-1}_{m-1}.
\end{equation}
\end{lemma}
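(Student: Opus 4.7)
The plan is to handle the three assertions (well-definedness, surjectivity, and fiber count) through a single explicit parametrization of preimages. Fix $N\subseteq M$ throughout the argument.

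\textbf{Well-definedness.} Given $x=(x_1,\dots,x_m)\in\Omega_N(k+np)$, I would first do a case split on the sign of $x_i$ for $i\in\ov N$. If $x_i\ge 0$, then $\ov{x_i}=x_i\bmod p\in\{0,\dots,p-1\}$; if $x_i<0$, then because $i\notin N$ forces $x_i\not\equiv 0\bmod p$, we have $\ov{x_i}=(x_i\bmod p)-p\in\{-(p-1),\dots,-1\}$. In either case $|\ov{x_i}|<p$, and a short calculation gives $|x_i|=|\ov{x_i}|+c_ip$ for a unique integer $c_i\ge 0$. For $i\in N$, in contrast, $x_i<0$ is a multiple of $p$, so $|x_i|=a_ip$ with $a_i\ge 1$. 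Summing,
\begin{equation*}
\|f_N(x)\|_1=\sum_{i\in\ov N}|\ov{x_i}|=(k+np)-p\Bigl(\sum_{i\in N}a_i+\sum_{i\in\ov N}c_i\Bigr)=k+(n-T)p,
\end{equation*}
where $T=\sum_{i\in N}a_i+\sum_{i\in\ov N}c_i$. The bounds $T\ge|N|$ and, using $0\le k<p$, $T\le n$ translate into $t:=n-T\in\{0,\dots,n-|N|\}$. The congruence relation follows from $\ov{x_i}\equiv x_i\bmod p$ for $i\in\ov N$ and $x_iq_i\equiv 0\bmod p$ for $i\in N$, so $\sum_{i\in\ov N}\ov{x_i}q_i\equiv\sum_{i=1}^m x_iq_i\equiv 0\bmod p$. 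Thus $f_N(x)\in C(\ov N,k+tp)$.

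\textbf{Surjectivity and fibers.} Fix $t\in\{0,\dots,n-|N|\}$ and $y=(y_i)_{i\in\ov N}\in C(\ov N,k+tp)$. I would parametrize preimages $x\in f^{-1}(\{y\})$ by tuples $(a_i)_{i\in N}\in\Z_{\ge 1}^{|N|}$ and $(b_i)_{i\in\ov N}\in\Z_{\ge 0}^{|\ov N|}$ via
\begin{equation*}
x_i=\begin{cases}-a_ip,& i\in N,\\ y_i+b_ip,& i\in\ov N,\ y_i\ge 0,\\ y_i-b_ip,& i\in\ov N,\ y_i<0.\end{cases}
\end{equation*}
Unwinding the convention (\ref{def-xibar}) yields $f_N(x)=y$, and the verifications that $x\in\Omega_N(k+np)$ (membership in $\mathcal{L}(M)$ modulo $p$, the $\Omega_N$ sign condition, and the total 1-norm) are direct. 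Conversely, since the codomain of $f$ is a disjoint union, $N$ is forced by $y\in C(\ov N,k+tp)$; then the inverse case analysis on the sign of $\ov{x_i}=y_i$ shows that every preimage of $y$ arises uniquely from parameters $a_i,b_i$ as above. The norm equation $\|x\|_1=k+np$ becomes
\begin{equation*}
\sum_{i\in N}a_i+\sum_{i\in\ov N}b_i=n-t,
\end{equation*}
so after the substitution $a_i'=a_i-1\ge 0$, stars and bars in $m=|N|+|\ov N|$ variables with target sum $n-t-|N|$ yields $C_{m-1}^{(n-t-|N|)+m-1}=C_{m-1}^{n-t+|\ov N|-1}$ solutions. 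Since this count is strictly positive for every $t\in\{0,\dots,n-|N|\}$, surjectivity onto $\bigsqcup_{t=0}^{n-|N|}C(\ov N,k+tp)$ follows as a by-product.

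The main hurdle I anticipate is bookkeeping: keeping the case analysis for $\ov{x_i}$ (according to the sign of $x_i$ and whether $i\in N$) aligned with the defining condition of $\Omega_N$, so that no spurious preimages are introduced and none are missed. Once the identities $|x_i|=|\ov{x_i}|+c_ip$ for $i\in\ov N$ and $|x_i|=a_ip$ for $i\in N$ are in hand, the surjectivity and the binomial count both reduce to stars and bars.
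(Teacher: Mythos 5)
Your proposal is correct and follows essentially the same route as the paper: reduce each coordinate of $x$ modulo $p$ (keeping track of sign) to get well-definedness and the bound $t\le n-|N|$, then count preimages by distributing $n-t-|N|$ multiples of $p$ over the $m$ coordinates via stars and bars. Your explicit parametrization of the fiber by $(a_i)_{i\in N}$ and $(b_i)_{i\in\ov N}$ is in fact slightly more careful than the paper's informal ``packet-distribution'' argument, since it makes transparent that a coordinate with $y_i=0$ can only absorb multiples of $p$ with positive sign (otherwise the defining condition of $\Omega_N$ would be violated), so no preimages are double-counted.
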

\begin{proof}
To show that $f$ is well defined it suffices to prove that for $N\subset M$ the functions $f_N$ are well defined.
Fixing $N$, and taking $x=(x_1,\dots,x_m)\in\Omega_N(M,k+np)$, let $y=f_N(x_1,\dots,x_m)$, each $\ov x_i$ 
given by convention \eqref{def-xibar}.
Then $y$ has $|\ov N|$ coefficients, since only the coefficients of the indices $i\in N$ were removed from
$x=(x_1,\dots,x_m)$.
Furthermore, $|y_j|=|\ov x_j|<p$ for $j\in\ov N$, what comes straight from convention \eqref{def-xibar}.
It only remains to show that $y$ in fact satisfies the linear congruence condition, and that its 1-norm is $k+tp$ for
some $t\geq0$.

So we write $\ov N=\{j_1,\dots,j_{|\ov N|}\}$ and $y=(y_{j_1},\dots,y_{j_{|\ov N|}})$, we must check that 
\begin{equation}\label{cong-id-y}
  q_{j_1}y_{j_1} + \dots + q_{j_{|\ov N|}}y_{j_{|\ov N|}}\equiv 0\ (\mbox{mod } p).
\end{equation}

  Since $x=(x_1,\dots, x_m)\in\Omega_N (M,k+np)$, we have:
  \begin{equation}
    q_1{x}_1+\dots+q_m{x}_m \equiv 0\ (\mbox{mod } p),
  \end{equation}
  that is, $\sum_{i\in M}q_ix_i$ is a multiple of $p$. On the other hand,
  \begin{equation}
    \sum_{i\in M}q_ix_i=\sum_{i \in N} q_i x_i + \sum_{j \in \ov{N}} q_j x_j,
  \end{equation}
  and since the first sum of the second member of the above identity is made up of multiples of $p$,
  the second sum of the second member must also be a multiple of $p$. Thus
  \begin{equation}
    \sum_{j \in \ov{N}} q_j x_j \equiv 0\ (\mbox{mod } p).
  \end{equation}
  For each $j\in\ov N$, $x_j\equiv \ov x_j (\,\text{mod }p)$, so
  \begin{equation}
    \sum_{j \in \ov{N}} q_j y_j=\sum_{j \in \ov{N}} q_j \ov x_j\equiv \sum_{j \in \ov{N}} q_jx_j\equiv 0(\,\text{mod }p).
  \end{equation}
  Thus, we have that $y$ satisfies \eqref{cong-id-y}, hence belongs to $C(\ov N, h)$ for some $h=\|y\|_1\geq0$.
  Finally, let us show that $h=k+tp$ with $t\leq n-|N|$.
		
  For each $j\in\ov N$ there exists $\alpha_j \in \Z$ such that $y_j=x_j-\alpha_jp$. convention \eqref{def-xibar}
  gives us $|x_j-\alpha_jp|=|x_j|-|\alpha_j|p$. Thus, 
  \begin{equation}\begin{aligned}\label{norma-y-x}
      \|y\|_1&=\sum_{j\in\ov N}|y_j|=\sum_{j\in\ov N}|x_j-\alpha_jp|=\sum_{j\in\ov N}|x_j|-p\sum_{j\in\ov N}|\alpha_j|=\\
      &=\sum_{j\in\ov N}|x_j|+\sum_{i\in N}|x_i| -p\sum_{j\in\ov N}|\alpha_j|-\sum_{i\in N}|x_i|=\\
      &=\|x\|_1-p\left(\sum_{j\in\ov N}|\alpha_j|+\sum_{i\in N}\frac{|x_i|}p\right).
  \end{aligned}\end{equation}
  
  Recall that $N$ is the set of indices $i$ such that $x_i$ is a negative multiple of $p$, then $|x_i|/p$ is an
  integer greater than or equal to 1. The quantity in parentheses in the last member of \eqref{norma-y-x} is
  an integer $s$ that satisfies $s\geq |N|$. That is,
  \begin{equation}
    \|y\|_1=k+np-sp=k+(n-s)p=k+tp,
  \end{equation}
  with $t\leq n-|N|$. Therefore, we can state that $f_N$ is well defined, and so is $f$.
	
  Now suppose that for some $N \subset M$ and $0\leq t\leq n-|N|$, it holds $C(\ov N,k+tp)\neq\emptyset$.
  Let $y = (y_{j_1}, y_{j_2}, \dots,y_{j_{\ov N,}}) \in C(\ov {N},k+tp)$. We will find $x \in \Omega(M,k+np)$ such
  that $f(x)=y$. Without loss of generality, we assume $N =\{1,2, \dots, |N|\}$, and therefore
  $\ov N=\{|N|+1,|N|+2,\dots,m\}$. For each $i \in \{ 1, \dots , m\}$ we set $\tilde x_i = -p$ if $i \in N$
  and $\tilde x_i = y_i $ if $i\in \ov N$, resulting in:
  \begin{equation*}
    \tilde{x} = (-p,-p, \dots, -p, y_{|N|+1},y_{|N|+2},\dots, y_m).
  \end{equation*}
  
  Of course, $\tilde x$ satisfies the linear congruence, that is, $\tilde x\in\Omega(M,\|\tilde x\|_1)$. At this stage
  we have $\| \tilde{x} \|_1 = \|y\|_1 + |N|p=k+tp+|N|p$.
  	
  To arrive at a point $x$ with $\|x\|_1=k+np$ we must add the difference of the norms $\|x\|_1-\|\tilde x\|_1=(n-t-|N|)p$
  to the coordinates of $\tilde x$ so that each coordinate preserves its sign
  and is increased by multiples of $p$. Note that this is only possible if $n-t-|N|\geq0$, and this is where we use the
  $t\leq n-|N|$ assumption. In particular, we can add these multiples to the first coordinate, and put
  \begin{equation}
    x = (-(n-t-|N|+1)p,-p, \dots, -p,y_{|N|+1},y_{|N|+2},\dots, y_m).
  \end{equation}
  
    The last statement of the theorem is proven by means of a standard combinatorial argument.
    Given a non-negative integer $a$, the number of ways to write $a$ as the sum of $m$ ordered non-negative integers
    is $C_{m-1}^{a+m-1}=\frac{(a+m-1)!}{(m-1)!a!}$.
    
    To construct $x$, after having defined $\tilde x$, we must distribute $(n-t-|N|)$ ``packets'' of $p$ units
    in $m$ coordinates, in an arbitrary way. This problem is then the number of ways of summing $m$ non-negative
    integers adding up to $n-t-|N|$, that is, $C_{m-1}^{n-t-|N|+m-1}$. Since $|\ov N|=m-|N|$ we arrive at 
    \begin{equation}
      |f^{-1}(\{y\})|=C^{n-t-|N|+m-1}_{m-1}=C^{n-t+|\ov N|-1}_{m-1}.
    \end{equation}
    This completes the proof.
\end{proof}

\begin{theorem}\label{formula} For $k\in\{0,1,\dots,p-1\}$ and $n\in\N$ we have
	\begin{equation}\label{NLknp}
		N_{\mathcal{L}}(k+np) =\sum_{t=0}^{m-1}\sum_{U\subseteq M}C_{m-1}^{n-t+|U|-1}\gamma(U,k+tp),
	\end{equation}
\end{theorem}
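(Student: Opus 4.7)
The plan is to combine Proposition \ref{prop7} with Lemma \ref{main-th} via a two-step counting, and then reconcile the range of the summation index $t$ with the uniform upper bound $m-1$ stated in the theorem.

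First, I would use Proposition \ref{prop7} to decompose
$$N_{\mathcal{L}}(k+np)=|\Omega(M,k+np)|=\sum_{N\subseteq M}|\Omega_N(k+np)|.$$
Next, for each fixed $N\subseteq M$, Lemma \ref{main-th} says the restriction $f|_{\Omega_N(k+np)}$ surjects onto $\bigsqcup_{t=0}^{n-|N|}C(\ov N,k+tp)$ and that every $y\in C(\ov N,k+tp)$ has preimage of cardinality $C_{m-1}^{n-t+|\ov N|-1}$, a quantity depending only on $t$ and $|\ov N|$. Grouping the points of $\Omega_N(k+np)$ by their image and summing fiber-by-fiber therefore gives
$$|\Omega_N(k+np)|=\sum_{t=0}^{n-|N|}\gamma(\ov N,k+tp)\,C_{m-1}^{n-t+|\ov N|-1}.$$
Reindexing $U:=\ov N$ (so that $|U|=|\ov N|$ and $n-|N|=n-m+|U|$) and summing over $N\subseteq M$ produces
$$N_{\mathcal{L}}(k+np)=\sum_{U\subseteq M}\sum_{t=0}^{n-m+|U|}\gamma(U,k+tp)\,C_{m-1}^{n-t+|U|-1}.$$

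The main obstacle, and really the only subtle point, is replacing the upper index $n-m+|U|$ by the uniform bound $m-1$ from the theorem statement. I would handle this by showing the two expressions agree term by term, since any term lying outside exactly one of the two ranges is automatically zero. On one hand, if $t>n-m+|U|$, then $n-t+|U|-1<m-1$ and so the binomial $C_{m-1}^{n-t+|U|-1}$ vanishes. On the other hand, if $t\geq m$, then any prospective $x\in C(U,k+tp)$ would satisfy $\|x\|_1=k+tp\geq mp$, contradicting the bound $\|x\|_1\leq|U|(p-1)\leq m(p-1)<mp$ forced by the coordinatewise constraint $|x_i|<p$ together with $|U|\leq m$, so $\gamma(U,k+tp)=0$. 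Hence the inner sum can equivalently be taken over $t=0,1,\dots,m-1$, yielding the identity in the theorem. Everything else reduces to bookkeeping built on top of Proposition \ref{prop7} and Lemma \ref{main-th}.
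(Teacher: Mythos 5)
Your proposal is correct and follows essentially the same route as the paper: decompose $\Omega(M,k+np)$ via Proposition \ref{prop7}, count each $|\Omega_N(k+np)|$ fiber-by-fiber using Lemma \ref{main-th}, and then replace the $N$-dependent range of $t$ by the uniform bound $m-1$ using the vanishing of the binomial coefficient for $t>n-|N|$ and the emptiness of $C(U,k+tp)$ for $t\geq m$. Your justification of the latter point via $\|x\|_1\leq |U|(p-1)<mp$ is in fact slightly more explicit than the paper's.
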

\begin{proof}
	Let $N\subset M$ and $0\leq t\leq n-|N|$ be fixed. In view of Theorem \ref{main-th} we have that, for each $y\in C(\overline{N},k+tp)$,
	$y$ has $C_{m-1}^{n-t+|\ov N|-1}$ preimages in $\Omega_N(M,k+np)$. Therefore,
	\begin{equation}
		|f^{-1}(C(\ov N,k+tp))|=C_{m-1}^{n-t+|\ov N|-1}.|C(\ov N,k+tp))|=C_{m-1}^{n-t+|\ov N|-1}.\gamma(\ov N,k+tp),
	\end{equation}

	We recall that by a property of the function $\Gamma$ (factorial), the combinatorics holds $C^a_b=0$ whenever
	$b>a\geq0$. Therefore, there is no problem in considering for counting purposes, $t>n-|N|$, in which case $f^{-1}(\{y\})$
	will be empty. And for $t\geq m$ it is immediate that $\gamma(\ov N,k+tp)=0$, since in this case $C(\ov N,k+tp)$ is empty.
	Therefore, the image of $\Omega_N(M,k+np)$ by $f$ is contained in $\sqcup_{0\leq t\leq m-1}C(\ov N,k+tp)$, and therefore
	\begin{equation}
		|\Omega_N(M,k+np)|=\sum_{t=0}^{m-1}C_{m-1}^{n-t+|\ov N|-1}.\gamma(\ov N,k+tp).
	\end{equation}
	And since $\Omega(M,k+np)=\bigsqcup_{N\subseteq M}\Omega_N(M,k+np)$ (Proposition \ref{prop7}) it is immediate that
	\begin{equation}\begin{aligned}
			N_{\mathcal{L}}(k+np) &=|\Omega(M,k+np)|=\sum_{N\subseteq M}|\Omega_N(M,k+np)|=\\
			&=\sum_{t=0}^{m-1}\sum_{U\subseteq M}C_{m-1}^{n-t+|U|-1}\gamma(U,k+tp).  
	\end{aligned}\end{equation}
\end{proof}

\section{Applications}

\begin{corollary} Let $p\in\N$ be an even integer. Then for any $i\in\N$ odd, $\text{dim}(\lambda_i)$ is even.
\end{corollary}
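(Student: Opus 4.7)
The plan is to combine the multiplicity formula recalled in the introduction,
\begin{equation*}
\text{dim}(\lambda_i) = \sum_{s=0}^{\lfloor i/2 \rfloor} \binom{s+m-2}{m-2} N_{\mathcal{L}}(i-2s),
\end{equation*}
with Theorem \ref{formula}, and thereby reduce the parity question to the parity of the coefficients $\gamma(U,h)$ that appear in our expansion of $N_{\mathcal{L}}$.

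First I would observe that when $i$ is odd, every index $i-2s$ in the sum above is odd and at least $1$. Writing $i-2s = k+np$ with $0\leq k<p$, the assumption that $p$ is even forces $k$ to be odd, so in particular $k\geq 1$; the same reasoning shows that $k+tp$ is odd and $\geq 1$ for every $t\geq 0$. This is exactly where the hypothesis on $p$ enters. Applying Theorem \ref{formula}, it now suffices to show that each $\gamma(U,k+tp)=|C(U,k+tp)|$ appearing in the resulting expression for $N_{\mathcal{L}}(k+np)$ is even.

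For that last step I would use a fixed-point-free involution: the map $x \mapsto -x$ sends $C(U,h)$ to itself (negation preserves the linear congruence, the $1$-norm, and the bound $|x_i|<p$), and when $h\geq 1$ its only candidate fixed point $x=0$ is ruled out by $\|0\|_1=0\neq h$. Hence $\gamma(U,h)$ is even for every $h\geq 1$, so in particular each $\gamma(U,k+tp)$ in our sum is even. Consequently $N_{\mathcal{L}}(i-2s)$ is a $\Z$-linear combination of even integers, and the dimension formula gives that $\text{dim}(\lambda_i)$ is even.

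I do not expect a genuine obstacle; the only point to watch is the bookkeeping that "$p$ even" together with "$i$ odd" really guarantees every $k+tp$ appearing in the relevant expansions is strictly positive, since otherwise the involution on $C(U,0)$ would have the fixed point $x=0$ and the parity argument would fail.
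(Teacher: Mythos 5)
Your proposal is correct and follows essentially the same route as the paper's own proof: the fixed-point-free involution $x\mapsto -x$ on $C(U,h)$ for $h\geq 1$, the observation that $p$ even and $i-2s$ odd force every relevant $k+tp$ to be odd (hence nonzero), and the conclusion via Theorem \ref{formula} and the multiplicity formula. No gaps; your explicit remark about why $x=0$ is the only potential fixed point is a welcome clarification of a step the paper leaves implicit.
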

\begin{proof}
  If $s$ is a positive integer then, for every $x\in C(U,s)$ we have $-x\neq x$ and $-x\in C(U,s)$. Thus $\gamma(U,s)$
  is even.

  Now, for $s\geq1$ an odd integer, since $p$ is even, we can write $s=k+np$ with $k$ an odd number in $\{0,1,\dots,p-1\}$
  and $n\geq0$. In particular, the term $k+tp$ occurring in the second hand side of \eqref{NLknp} is odd, for all $t\geq0$.
  Hence $k+tp$ will never be zero, and $N_{\mathcal{L}}(s)$ will be even.

  Finally, back to identity \eqref{dim-auto-esp}, the number $i-2s$ is odd, and so $N_{\mathcal{L}}(i-2s)$
  is even. Therefore $\text{dim}(\lambda_i)$ is even.
\end{proof}

	
\par\normalfont\fontsize{12}{12}\selectfont
\bibliographystyle{abbrv}
\bibliography{ref-artigo.gedeana}

\end{document}